\newcommand{\M}{\mathbb{M}}
\newtheorem{theorem}{Theorem}
\newtheorem{lemma}[theorem]{Lemma}
\newtheorem{proposition}[theorem]{Proposition}
\newtheorem{remark}{Remark}
\newtheorem{problem}{Problem}
\newcommand{\ds}{\displaystyle}
\newcommand{\enma}[1]   {\ensuremath{#1}}
\newcommand{\non}{\nonumber}
\newcommand{\beq}{\begin{equation}}
\newcommand{\eeq}{\end{equation}}
\newcommand{\bseq}{\begin{subequations}}
\newcommand{\eseq}{\end{subequations}}
\newcommand{\beqn}{\begin{eqnarray}}
\newcommand{\eeqn}{\end{eqnarray}}
\newcommand{\ba}{\begin{array}}
\newcommand{\ea}{\end{array}}
\newcommand{\bct}{\begin{center}}
\newcommand{\ect}{\end{center}}
\newcommand{\btmz}{\begin{itemize}}
\newcommand{\etmz}{\end{itemize}}
\newcommand{\benum}{\begin{enumerate}}
\newcommand{\eenum}{\end{enumerate}}
\newcommand{\mc}{\mathcal}
\newcommand{\R}{{\mathbb R}}
\newcommand{\C}{{\mathbb C}}
\newcommand{\D}{{\mathbb D}}
\newcommand{\cH}{\enma{\mathcal H}}
\newcommand{\cL}{\enma{\mathcal L}}
\newcommand{\norm}[1]{\| #1 \|}                 
\newcommand{\diag}      {\enma{\mathrm{diag}}}
\newcommand{\inner}[2]{\left\langle #1,#2 \right\rangle}
\newcommand{\matbegin}{
        \left[
}
\newcommand{\matend}{
        \right]
}
\newcommand{\tbo}[2]{
  \matbegin \begin{array}{c}
       #1 \\ #2
       \end{array} \matend }
\newcommand{\thbo}[3]{
  \matbegin \begin{array}{c}
       #1 \\ #2 \\ #3
       \end{array} \matend }
\newcommand{\thbth}[9]{
 \matbegin \begin{array}{ccc}
                #1 & #2 & #3 \\
                #4 & #5 & #6 \\
                #7 & #8 & #9
                \end{array}\matend}
\newcommand{\be}{\begin{equation}}
\newcommand{\ee}{\end{equation}}
\newcommand{\cplxs}{ C\kern -.35em \rule{0.03 em}{.7 ex}~   }
\def\complex{\hbox{C\kern -.45em \rule{0.03 em}{1.5 ex}}~}
\newcommand{\bi}{\begin{itemize}}
\newcommand{\ei}{\end{itemize}}
\newtheorem{assumption}{Assumption}
\DeclareMathOperator*{\argmin}{argmin}
\DeclareMathOperator*{\minimize}{minimize}
\DeclareMathOperator*{\maximize}{maximize}
\newcommand{\mre}{\mathrm{e}}
\newcommand{\mrj}{\mathrm{j}}
\newcommand{\bbR}{\mathbb{R}}
\newcommand{\DefinedAs}[0]{\mathrel{\mathop:}=}
\newcommand{\vsp}{\vspace{0.1cm}}
\newproof{proof}{Proof}
\begin{document}

\begin{frontmatter}

\title{\LARGE  On the optimal control problem for a class \\of monotone bilinear systems}
\author[umn]{Neil K.\ Dhingra\corref{cor1}\tnoteref{fumn}}
\author[eth]{Marcello Colombino\tnoteref{feth}}
\author[umn]{Mihailo R.\ Jovanovi\'{c}\tnoteref{fumn}}
\author[lund]{\\[0.1cm]Anders Rantzer}
\author[eth]{and Roy S.\ Smith\tnoteref{feth}}

\tnotetext[fumn]{Supported in part by the National Science Foundation under Awards ECCS-1407958 and CNS-1544887.}
\tnotetext[feth]{Supported in part by the Swiss National Science Foundation under Award
2-773337-12.}

\address[umn]{Department of Electrical and Computer Engineering, University of Minnesota, Minneapolis, MN 55455}
\address[eth]{Automatic Control Laboratory, Swiss Federal Institute of Technology, Z\"urich, Switzerland}
\address[lund]{Automatic Control LTH, Lund University, SE-221 00 Lund, Sweden}

\cortext[cor1]{Corresponding author}

\begin{abstract}
We consider a class of monotone systems in which the control signal multiplies the state. Among other applications, such bilinear systems can be used to model the evolutionary dynamics of HIV in the presence of combination drug therapy. For this class of systems, we formulate an infinite horizon optimal control problem, prove that the optimal control signal is constant over time, and show that it can be computed by solving a finite-dimensional non-smooth convex optimization problem. We provide an explicit expression for the subdifferential set of the objective function and use a subgradient algorithm to design the optimal controller. We further extend our results to characterize the optimal robust controller for systems with uncertain dynamics and show that computing the robust controller is no harder than computing the nominal controller. We illustrate our results with an example motivated by combination drug therapy.
\end{abstract}
\begin{keyword}
Positive systems \sep Decentralized control \sep Monotone systems \sep Optimal control \sep Structured control
\end{keyword}

\end{frontmatter}

	\vspace*{-2ex}
\section{Introduction}

This work is motivated by recent developments on the modeling and design of combination drug therapy design for HIV treatment~\cite{hernandez2011discrete,hernandez2013optimal,colaneri2014convexity,ranber14,jonranmur13,jonmatmur13,jonmatmur14,dhijovACC15,dhingra2016convexity}. These advancements provide a framework for modeling the evolutionary dynamics of the population of different HIV mutants within an infected patient as a bilinear monotone system in which drug therapy affects the death rate of specific mutagens. The drug cocktail prescribed to the patient over the course of their treatment can be interpreted as a time-varying decentralized controller for the positive system that describes the virus mutation.

A system is called positive if, for every nonnegative initial condition and nonnegative input signal, its state and output remain nonnegative for all time~\cite{farina2011positive}. Positive systems and their nonlinear counterpart, monotone systems, arise naturally when modeling physical and biological systems with inherently nonnegative states, e.g., the viral population of HIV. Other common examples include Markov chains and models of chemical reaction networks, transportation networks, population dynamics, and heat transfer; in these, the states represent probabilities, concentrations, traffic density, species counts, and temperature, respectively.


Positive systems are not only interesting from a practical point of view but they also have rich structure and subtle system-theoretic properties. The study of positive systems dates back to the early 1900{s}, when Perron and Frobenius explored the spectral properties of nonnegative matrices. In recent years positive systems have gained renewed attention because several problems that are intractable in general greatly simplify for this class of systems.

It has been recently shown that the design of unconstrained decentralized controllers for positive systems is convex~\cite{rantzer2015scalable,rantzer2016kalman}. However, many problems, such as the design of drug therapy for HIV, impose additional structural constraints which cannot be handled by the LMI approach proposed in~\cite{rantzer2016kalman}. To overcome this challenge, a design of $\cL_1$ and $\cH_\infty$ controllers which satisfy such structural constraints but achieve suboptimal performance has been examined in~\cite{jonranmur13,jonmatmur13,jonmatmur14}. More recently, building on~\cite{colaneri2014convexity,ranber14}, convexity of the structured decentralized $\cH_2$ and $\cH_\infty$ optimal control problems for positive systems was established in~\cite{dhingra2016convexity}.


While references~\cite{jonranmur13,jonmatmur13,jonmatmur14,dhijovACC15,dhicoljovCDC16,dhingra2016convexity} either assume a constant control signal or use heuristics to introduce time dependence, we show that such a constant input is in fact optimal  for the induced power norm. We cast the optimal synthesis problem of constant control inputs as a finite-dimensional non-smooth convex optimization problem and develop an algorithm for designing the optimal controller. Finally, we exploit the advancements in~\cite{ColSmi:2014:IFA_4769,ColSmi:2016:IFA_5242} to design robust controllers which guard against model uncertainty.

Our presentation is organized as follows. In Section~\ref{sec.prel}, we formulate the optimal control problem for the class of bilinear positive systems that we study. In Section~\ref{sec.main}, we prove that a constant control input solves our optimal control problem and develop a subgradient algorithm. In Section~\ref{sec.robust}, we show that designing the robust controller for systems with model uncertainty is no harder than computing a nominal controller. In Section~\ref{sec.hiv}, we apply our results to an example inspired by combination drug therapy design for HIV. Finally, we conclude the paper in Section~\ref{sec.conc}.

	\vspace*{-2ex}
\section{Preliminaries and problem formulation}
	\label{sec.prel}

In this section, we provide necessary background material, introduce the class of bilinear positive systems that we study, and formulate the optimal control problem.

	\vspace*{-2ex}
\subsection{Preliminaries}

The set of real numbers is denoted by $\R$ and $\R_+$ $(\R_{++})$ is the set of nonnegative (positive) reals. The set of $n\times  n$ Metzler matrices (matrices with nonnegative off-diagonal elements) is denoted by $\M^n$. The set of $n\times n$ nonnegative (positive) diagonal matrices is denoted by $\D^n_{+}$ $(\D^n_{++})$. We use $\bar{\sigma}(A)$ to indicate the largest singular value of $A$.

The space of square integrable signals is denoted by $\mc L_2$. The inner product in this space is given by
$$
\inner{u}{v}_2 \;\DefinedAs\; \int_0^\infty  u^T(t) \, v(t) \,\mathrm{d} t
$$
with the associated norm $\norm{v}_{2}^2 = \inner{v}{v}_2$. The power semi-norm of a signal $v$ is
\be\label{eq.power_seminorm}
\norm{v}_{\textnormal{pow}}^2
\;\DefinedAs\;
\limsup_{T \to \infty}
~
\dfrac{1}{T} \int_0^T  v^T(t) \, v(t) \,\mathrm{d} t.
\ee
The space of trigonometric polynomials is defined as
\be
	\mc {T}
	\;\DefinedAs\;
	\left\{ g: \R\to\R^n\, \left |~  g(t) = \sum_{k \, = \, 1}^N \alpha_k \mathrm e ^{\mrj\lambda_k t}, \right.
  \lambda_k \in \R,\, \alpha_k\in\C \right\},
  \non
\ee
where $\mrj$ is the imaginary unit. The closure of $\mc{T}$ with respect to the metric $\norm{f-g}_\textnormal{pow}$ is given by the space of Besicovitch almost periodic functions $\mc B_2$~\cite{besioovitoh1954almost}. This is a Hilbert space with the inner product~\cite{corduneanu2009almost,constanda2010integral}
$$
\inner{u}{v} \; \DefinedAs \;
\limsup_{T \to \infty}  ~ \dfrac{1}{T} \int_0^T  u^T(t) \, v(t) \,\mathrm{d} t
$$
and the norm $\|\cdot\|_\textnormal{pow}$. The mean of a signal $v \in \mc B_2$
$$
M(v) \; \DefinedAs \; \lim_{T \, \to \, \infty}  \; \dfrac{1}{T} \int_0^T  v(t) \,\mathrm{d} t
$$
is well defined for every $v \in \mc B_2$. Furthermore, each $v \in \mc B_2$ can be decomposed uniquely as $v = \bar v + \tilde v$, where $\bar v$ is a constant signal given by $\bar{v} = M(v)$ and $M(\tilde v) =0$. Note that the inner product between a constant signal $\bar v$ and a zero-mean signal $\tilde v$ is zero, i.e., $\inner{\bar v}{\tilde v} = 0$.

The space $\mc B_2$ contains all bounded $\mc L_2$ signals, periodic signals, and almost periodic signals. At the same time, it alleviates challenges arising from the fact that the space of signals with bounded power norm is not a Hilbert space; for additional discussion see~\cite{mari1996counterexample}.

	\vspace*{-2ex}
\subsection{Problem formulation}
Consider the bilinear system
\begin{subequations}
	\label{eq.sys-z}
\be
\dot x
\; = \;
\left(
A
\,+
\,D(u)
\right)
x
\,+ \,
B \, d
\label{eq.sys}
\ee
where $A\in\R^{n\times n}$, $B\in\R^{n\times q}$, and $D$: $\R^m\to{\D^n}$ is a linear operator. {For given control and disturbance signals $u \in \mc{B}_2$ and $d \in \mc{B}_2$, we associate the performance output,}
\be
z_{u,d} \; = \;
\tbo{Q^{1\over 2}}{0}x \; + \; \tbo{0}{R^{1\over 2}}u
\label{eq.z}
\ee
\end{subequations}
{with~\eqref{eq.sys}, where $Q \succeq 0$ and $R \succ 0$ are the state and control weights.}

\begin{assumption}\label{ass.pos}
{$A$ is a Metzler matrix, $D$: $\R^m\to \D^n$ is a linear operator, $B$ and $Q$ are matrices with nonnegative entries, and there is a constant stabilizing control input $\bar u$ for~\eqref{eq.sys}.}
\end{assumption}

Under Assumption~\ref{ass.pos},~\eqref{eq.sys-z} is a positive system. This implies that for every control input $u$, every nonnegative disturbance $d$, and every nonnegative initial condition $x(0)$, the state $x$ and the output $z_{u,d}$ of system~\eqref{eq.sys-z} remain nonnegative at all times.

The induced power norm of a stable system~\eqref{eq.sys-z} is,
\be
J(u)
\;\DefinedAs\;
\sup_{\norm{d}_{\textnormal{pow}}^2 \, \leq \, 1}\;
\norm{z_{u,d} }^2_{\textnormal{pow}}
\label{eq.J}
\ee
and it quantifies the response to the worst case persistent disturbance $d$. For unstable open-loop systems~\eqref{eq.sys-z}, there may be no stabilizing control input $u$ in $\mc{L}_2$. Thus, the $\mc{L}_2$-induced gain does not provide a suitable measure of input-output amplification for~\eqref{eq.sys-z} and $J(u)$ represents an appropriate generalization of the $\cH_\infty$ norm for this class of bilinear positive systems.
	
We now formulate the optimal control problem.

\begin{problem}\label{prob.opt}
{Design a
stabilizing bounded control signal $u \in \mc{B}_2$ to minimize $J(u)$ for bilinear positive system~\eqref{eq.sys-z}}.
 \end{problem}

	\vspace*{-2ex}
\section{Solution to the optimal control problem}
	\label{sec.main}

{In this section, we prove that a constant control input solves Problem~\ref{prob.opt} and provide a subgradient algorithm for finding the optimal solution.}

	\vspace*{-2ex}
\subsection{Main result}

Since $J(u)$ is given by~\eqref{eq.J}, any $u^\star$ which solves Problem~\ref{prob.opt} satisfies $\norm{z_{u^\star, d}}_{\textnormal{pow}}^2 \leq J(u^\star) \leq J(u)$ for all $u \in \mc{B}_2$ and $d \in \mc{B}_2$. In particular,
$
	\norm{z_{u^\star, d}}_{\textnormal{pow}}^2 \leq J(\bar u)
$
where $\bar u$ is a constant control input. As shown in~\cite{tanaka2011bounded}, for constant control inputs the worst-case disturbance $\bar{d}$ is also constant, i.e., $J(\bar u) = \norm{z_{\bar{u}, \bar d}}_{\textnormal{pow}}^2$. In what follows, we show that $\norm{z_{u, \bar d}}_{\textnormal{pow}}^2$ is a convex function of $u$, that a constant $\bar u$ minimizes it, and, thus, that a constant control input solves Problem~\ref{prob.opt}.

We first establish convexity of $\norm{z_{u, \bar d}}^2_{\textnormal{pow}}$.

\begin{lemma}\label{lem.convex.l2}
Let $d(t) = \bar d$ be a constant non-negative disturbance. Then, under Assumption~\ref{ass.pos}, the power norm of the output $\norm{z_{u, \bar d} }_{\textnormal{pow}}^2$ is a convex function of $u\in\mc{B}_2$.
\end{lemma}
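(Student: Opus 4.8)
\noindent\emph{Proof idea.}
The plan is to prove more than plain convexity: for every fixed $t\ge 0$ I would show that each entry of the state $x_{u,\bar d}(t)$, and each product of two entries, is a convex function of $u\in\mc B_2$. Granting this, the lemma is immediate. Write
\be
\|z_{u,\bar d}\|_{\textnormal{pow}}^2 \;=\; \limsup_{T\to\infty}\; \frac1T\int_0^T \Big(x^T(t)Qx(t) + u^T(t)Ru(t)\Big)\,\mathrm{d} t
\non
\ee
(the block structure of $z_{u,\bar d}$ leaves no cross term). Since $Q$ has nonnegative entries, $x^T(t)Qx(t)=\sum_{i,j}Q_{ij}\,[x_{u,\bar d}(t)]_i[x_{u,\bar d}(t)]_j$ is a nonnegative combination of the (assumed convex) products $[x_{u,\bar d}(t)]_i[x_{u,\bar d}(t)]_j$, hence convex in $u$; and $u^T(t)Ru(t)$ is convex in $u$ because $R\succ0$. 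Thus each integrand is convex in $u$, each finite-horizon average $\frac1T\int_0^T(\,\cdot\,)\,\mathrm{d} t$ is convex in $u$, and the $\limsup$ of a family of convex functions is convex: written as $\inf_N\sup_{T\ge N}$, the inner supremum is convex and the outer infimum is a pointwise decreasing limit of convex functions, hence convex as an $[0,+\infty]$-valued function on $\mc B_2$.

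The crux is the structural fact that each entry $[x_{u,\bar d}(t)]_i$ is a \emph{nonnegative superposition of exponentials of real affine functionals of $u$}. Such functions form a cone closed under nonnegative combinations, under products --- since $\exp(\alpha(u))\exp(\beta(u))=\exp(\alpha(u)+\beta(u))$ --- and under pointwise limits, and every member is convex; in particular the products $[x_{u,\bar d}(t)]_i[x_{u,\bar d}(t)]_j$ stay in the cone and are convex, which is what the reduction needs. To prove the structural fact I would use a Feynman--Kac representation of the transition operator. Because $A$ is Metzler, decompose $A=\tilde A+\mathrm{diag}(r)$ with $\tilde A$ the generator of a continuous-time Markov chain (off-diagonal entries of $A$, diagonal chosen so that the rows sum to zero) and $r\in\R^n$ fixed; then $A+D(u(\tau))=\tilde A+\mathrm{diag}\big(V(u(\tau),\cdot)\big)$ with the potential $V(u(\tau),i)=r_i+[D(u(\tau))]_{ii}$ affine in $u$, and for the state transition matrix $\Phi_u$ of $A+D(u(\cdot))$,
\be
[\Phi_u(t,s)]_{ij} \;=\; \mathbb{E}_i\!\left[\exp\!\Big(\textstyle\int_s^t V\big(u(\tau),X_\tau\big)\,\mathrm{d}\tau\Big)\mathbf{1}_{\{X_t=j\}}\right],
\non
\ee
$(X_\tau)$ being the chain generated by $\tilde A$ and started at $i$ at time $s$. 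Along each path the exponent is a real affine functional of $u$, so the integrand is an exponential of an affine functional of $u$ and is convex in $u$; the expectation is a nonnegative superposition and is therefore still in the cone. A probability-free alternative is a Picard iteration on the mild form of $\dot y=(A+D(u))y$ that integrates the diagonal exactly --- its flow is exactly $\exp$ of an affine functional of $u$ --- and handles the nonnegative off-diagonal coupling by iteration; positivity keeps every iterate in the cone and the iteration converges monotonically. Finally $x_{u,\bar d}(t)=\Phi_u(t,0)x(0)+\int_0^t\Phi_u(t,s)B\bar d\,\mathrm{d} s$, and since $x(0)$ and $B\bar d$ have nonnegative entries, each entry of $x_{u,\bar d}(t)$ is a nonnegative superposition of entries of $\Phi_u$, so it too lies in the cone.

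The step I expect to be the main obstacle is precisely the last inference in the reduction: passing from convexity of each state entry to convexity of the quadratic cost $x^T(t)Qx(t)$. This fails for generic nonnegative convex functions --- a product of two of them can be nonconvex --- so the proof cannot rely on convexity and positivity alone; it genuinely needs the exponential-of-affine structure, which is exactly what makes products close up. The remaining points are technical: making the Feynman--Kac (equivalently, Picard) representation rigorous for merely locally square-integrable, possibly unbounded controls and for open-loop unstable dynamics, and carrying out the $\limsup$-of-convex argument with extended-real-valued arithmetic, noting that $\|z_{u,\bar d}\|_{\textnormal{pow}}^2=+\infty$ on the (automatically convex) set of controls that fail to stabilize the system.
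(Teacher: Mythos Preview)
Your approach is correct and, in the part that establishes componentwise convexity of the state, essentially reproduces from scratch (via a Feynman--Kac representation) the result the paper simply cites from \cite{colaneri2014convexity}. That part is a nice self-contained alternative.

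Where you genuinely diverge from the paper is in the passage from componentwise convexity of $x(t)$ to convexity of $x^T(t)Qx(t)$. You prove the stronger structural fact that each $[x(t)]_i$ lies in the cone of nonnegative mixtures of $\exp(\text{affine in }u)$, and then use closure of that cone under products to get convexity of each $[x(t)]_i[x(t)]_j$. The paper takes a much shorter route: since $Q\succeq 0$ has nonnegative entries, $v\mapsto v^TQv$ is convex and, on the nonnegative orthant, nondecreasing in each coordinate; since each $[x(t)]_i$ is nonnegative and convex in $u$, the vector composition rule (Boyd--Vandenberghe) gives convexity of $x^T(t)Qx(t)$ in $u$ directly. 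Concretely, with $x^{(\lambda)}$ the state for $u_\lambda=\lambda u_1+(1-\lambda)u_2$ and $y=\lambda x^{(1)}+(1-\lambda)x^{(2)}$, componentwise convexity and positivity give $0\le x^{(\lambda)}\le y$, hence $(x^{(\lambda)})^TQx^{(\lambda)}\le y^TQy\le \lambda (x^{(1)})^TQx^{(1)}+(1-\lambda)(x^{(2)})^TQx^{(2)}$, the first inequality using $Q_{ij}\ge 0$ and the second $Q\succeq 0$.

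So your remark that ``the proof cannot rely on convexity and positivity alone'' is too pessimistic: convexity plus nonnegativity of the state, together with monotonicity of the quadratic form on $\R_+^n$, is already enough. What your route buys is a slightly sharper conclusion (each product $x_ix_j$ is individually convex, which even dispenses with $Q\succeq 0$ and only needs $Q_{ij}\ge 0$) and independence from the external reference; what the paper's route buys is brevity.
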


\begin{proof}
By~\cite[Theorem 2]{colaneri2014convexity} every component of $x(t)$ is a convex function of the control input $u\in\mc L_2[0,t]$. The power norm of the performance output~\eqref{eq.z} is given by,
\[
	\norm{z_{u, \bar d} }_{\textnormal{pow}}^2
	\,=\,
	\limsup_{T \to \infty}
	~
	\dfrac{1}{T} \int_0^T
	\left(
	x^T(t)Qx(t)
	\,+\,
	u^T(t)Ru(t)
	\right)
	\mathrm{d} t.
\]
{The matrix $R$ is positive definite so the second term on the right-hand side is a convex function of $u$. Furthermore, since the matrix $Q \succeq 0$ has nonnegative entries, $v^TQv$ is nondecreasing in the elements of $v$. Thus, the composition rules for convex functions~\cite{boyvan04} imply that $x^T(t)Qx(t)$ is a convex function of $u\in\mc L_2[0,t]$. }

{Let $P_T$ denote the mapping that truncates the support of a signal $v$ to $[0,T]$. If there is $T$ such that $P_Tv$ is not an $\cL_2$ signal, the power seminorm~\eqref{eq.power_seminorm} of $v$ is infinite. This implies that, for any $v \in \mc{B}_2$, $P_Tv \in \mc{L}_2[0,T]$. Thus, any $v \in \mc{B}_2$ can be written as $\lim_{T \to \infty} P_Tv$ of $P_Tv \in \mc{L}_2[0,T]$.} Since both the integral and the limit preserve convexity, $\norm{z_{u, \bar d} }_{\textnormal{pow}}^2$ is a convex function of $u \in \mc{B}_2$.
\end{proof}

\begin{lemma}\label{lem}
Let $\bar u$ be a stabilizing constant control input for~\eqref{eq.sys} and $\bar d$ be a constant non-negative disturbance. Then, the directional derivative of $\norm{z_{u, \bar d} }_{\textnormal{pow}}^2$ evaluated at $\bar u$  is zero for any bounded zero-mean variation $\tilde u \in \mc{B}_2$.
\end{lemma}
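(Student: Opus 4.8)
The plan is to perturb the constant input $\bar u$ by a small multiple of $\tilde u$, expand the steady-state response of~\eqref{eq.sys} to second order in the perturbation parameter, and observe that the first-order term of $\norm{z_{u,\bar d}}^2_{\textnormal{pow}}$ vanishes because the first-order correction to the state is a zero-mean signal, which is orthogonal in $\mc{B}_2$ to the constant nominal state. Concretely, set $\bar A \DefinedAs A + D(\bar u)$, which is Hurwitz since $\bar u$ is stabilizing, and let $\bar x \DefinedAs -\bar A^{-1} B\bar d$ denote the (nonnegative) nominal steady state, so that $\norm{z_{\bar u,\bar d}}^2_{\textnormal{pow}} = \bar x^T Q\bar x + \bar u^T R\bar u$. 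For a bounded zero-mean $\tilde u\in\mc{B}_2$ and small $\epsilon$, put $u_\epsilon\DefinedAs \bar u + \epsilon\tilde u$, so that by linearity of $D$ the state obeys $\dot x = \big(\bar A + \epsilon D(\tilde u(t))\big)x + B\bar d$. Since $\tilde u$ is bounded, $D(\tilde u(\cdot))$ is a uniformly bounded perturbation of $\bar A$, hence for $|\epsilon|$ small this linear time-varying system is uniformly exponentially stable and has a unique bounded steady-state solution $x_\epsilon \in \mc{B}_2$, with $\norm{x_\epsilon - \bar x}_{\textnormal{pow}} = O(\epsilon)$ (classical when $\tilde u\in\mc{T}$; the general case follows from density of $\mc{T}$ in $\mc{B}_2$ and continuity of the steady-state map).

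Next I would isolate the mean of $x_\epsilon$. Decompose $x_\epsilon = \bar x_\epsilon + \tilde x_\epsilon$ with $\bar x_\epsilon = M(x_\epsilon)$ and $M(\tilde x_\epsilon) = 0$. Because $x_\epsilon$ is bounded, $M(\dot x_\epsilon) = 0$; taking the mean of the governing ODE and using linearity of $D$ together with $M(\tilde u) = 0$, so that $M\big(D(\tilde u)\bar x_\epsilon\big) = D\big(M(\tilde u)\big)\bar x_\epsilon = 0$, yields
\[
\bar A\,\bar x_\epsilon \;+\; \epsilon\, M\!\big(D(\tilde u)\,\tilde x_\epsilon\big) \;+\; B\bar d \;=\; 0 .
\]
Subtracting this identity from the full ODE shows that $\tilde x_\epsilon$ is forced only by terms of order $\epsilon$ (the leading one being $\epsilon D(\tilde u)\bar x_\epsilon$), hence $\norm{\tilde x_\epsilon}_{\textnormal{pow}} = O(\epsilon)$; feeding this back, $M\big(D(\tilde u)\tilde x_\epsilon\big) = O(\epsilon)$, and therefore $\bar x_\epsilon = -\bar A^{-1}B\bar d + O(\epsilon^2) = \bar x + O(\epsilon^2)$. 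The crucial conclusion is that the $O(\epsilon)$ part of the state perturbation is purely oscillatory (zero-mean), while its mean moves only at order $\epsilon^2$.

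Finally I would expand the cost. Writing $\norm{z_{u_\epsilon,\bar d}}^2_{\textnormal{pow}} = M(x_\epsilon^T Q x_\epsilon) + M(u_\epsilon^T R u_\epsilon)$ with $x_\epsilon = \bar x_\epsilon + \tilde x_\epsilon$ and $u_\epsilon = \bar u + \epsilon\tilde u$, and using the orthogonality $\inner{\bar v}{\tilde w} = 0$ of a constant and a zero-mean signal in $\mc{B}_2$ — which annihilates the cross terms $M(\bar x_\epsilon^T Q\tilde x_\epsilon) = \bar x_\epsilon^T Q\,M(\tilde x_\epsilon) = 0$ and $M(\bar u^T R\tilde u) = \bar u^T R\,M(\tilde u) = 0$ — one obtains
\[
\norm{z_{u_\epsilon,\bar d}}^2_{\textnormal{pow}}
\;=\;
\bar x_\epsilon^T Q\bar x_\epsilon \;+\; M(\tilde x_\epsilon^T Q\tilde x_\epsilon) \;+\; \bar u^T R\bar u \;+\; \epsilon^2\, M(\tilde u^T R\tilde u) .
\]
Each $\epsilon$-dependent contribution is $O(\epsilon^2)$: $\bar x_\epsilon^T Q\bar x_\epsilon = \bar x^T Q\bar x + O(\epsilon^2)$ by the refined mean estimate, $M(\tilde x_\epsilon^T Q\tilde x_\epsilon) = O(\epsilon^2)$ since $\norm{\tilde x_\epsilon}_{\textnormal{pow}} = O(\epsilon)$, and the last term is $O(\epsilon^2)$ because $\tilde u$ is bounded. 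Hence $\norm{z_{u_\epsilon,\bar d}}^2_{\textnormal{pow}} = \norm{z_{\bar u,\bar d}}^2_{\textnormal{pow}} + O(\epsilon^2)$, so $\lim_{\epsilon\to 0^+}\big(\norm{z_{\bar u + \epsilon\tilde u,\bar d}}^2_{\textnormal{pow}} - \norm{z_{\bar u,\bar d}}^2_{\textnormal{pow}}\big)/\epsilon = 0$, which is the claim.

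The main obstacle is the perturbation analysis of the steady-state map on $\mc{B}_2$: establishing existence, uniqueness, and the $O(\epsilon)$ power-norm bound for $x_\epsilon - \bar x$ when $\tilde u$ is an arbitrary bounded zero-mean element of $\mc{B}_2$ rather than merely a trigonometric polynomial, and confirming that the \emph{mean} of the state perturbation is only second order (this is exactly where $D$ being linear and $M(\tilde u)=0$ enter, together with $M(\dot x_\epsilon)=0$). Once these are in hand, everything else is routine bookkeeping with the Hilbert-space orthogonality between constant and zero-mean signals in $\mc{B}_2$.
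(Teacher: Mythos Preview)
Your proposal is correct and follows essentially the same route as the paper: both arguments show that the $O(\epsilon)$ part of the state perturbation is zero-mean (because $D$ is linear and $M(\tilde u)=0$) and then invoke the $\mc{B}_2$ orthogonality of constant and zero-mean signals to kill the cross terms in $\norm{z}^2_{\textnormal{pow}}$. The only organizational difference is that the paper uses a standard regular-perturbation expansion $x=\bar x+\epsilon\tilde x+O(\epsilon^2)$ (citing Khalil), with $\tilde x$ solving the linearized equation $\dot{\tilde x}=\bar A\tilde x+D(\tilde u)\bar x$ and hence zero-mean, whereas you decompose the \emph{exact} solution into its mean and zero-mean parts and bootstrap to obtain $M(x_\epsilon)=\bar x+O(\epsilon^2)$; the paper's appeal to regular perturbation theory plays precisely the role of the $\mc{B}_2$ steady-state existence/continuity step you correctly flag as the main technical obstacle.
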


\begin{proof}
The dynamics~\eqref{eq.sys} with control input $\bar u + \varepsilon \tilde{u}$ and constant disturbance $\bar d$ are
\be
\dot x
	\; = \;
\left(
A
\,+
\,D(\bar u \, + \, \varepsilon \tilde u)
\right)
x
\,+ \,
B \, \bar d.
\label{eq.sysall}
\ee
Since the unperturbed system (with $\varepsilon=0$) is exponentially stable and the solution $x(t)$ is continuous in $\varepsilon$,~\eqref{eq.sysall} represents a system in a regularly perturbed form~\cite{kokkharei99,kha96}. The Taylor series expansion can be used to write the solution to the perturbed dynamics as,
\be\label{eq.approx}
    x(t)
    \;=\;
    \bar x(t)
    \;+\;
    \varepsilon \, \tilde x(t)
    \;+\;
    \mathcal{O}(\varepsilon^2)
\ee
where $\bar x$ is the nominal solution that solves~\eqref{eq.sysall} for $\varepsilon = 0$
	\begin{subequations}
	\label{eq.x0x1}
	\be
\dot{\bar{x}}
 \; = \;
\left(
A
\,+\,
D(\bar u)
\right)
\bar{x}
\,+\,
B \, \bar d
\label{eq.x0}
	\ee
and $\tilde x$ represents the first-order correction. By~\cite[Theorem 10.2]{kha96}, $\tilde x$ is determined by the solution to a differential equation corresponding to the $\mathcal{O}(\varepsilon)$ terms in the expression obtained by substituting~\eqref{eq.approx} into equation~\eqref{eq.sysall},
	\be
\dot{\tilde{x}}
\; = \;
\left(
A
\,+
\,D(\bar u )
\right)
\tilde{x}
\,+ \,
D(\tilde u) \, \bar{x}.
\label{eq.x1}
\ee
\end{subequations}
Both~\eqref{eq.x0} and~\eqref{eq.x1} are stable LTI systems. Thus, for the constant disturbance $d(t)=\bar d$, the solution $\bar{x}$ to~\eqref{eq.x0} is asymptotically constant. Since $\tilde u$ is zero-mean, the signal $D(\tilde u) \, \bar{x}$ is also zero-mean. The dynamics~\eqref{eq.x1} are a stable LTI system forced by a zero-mean input and therefore $\tilde{x}$ is also zero-mean.


Because $\bar x$ is asymptotically constant and $\tilde x$ is zero-mean, $\inner{Q^{1/2}\bar{x}}{Q^{1/2}\tilde{x}} = 0$. Similarly, $\inner{R^{1/2}\bar u}{R^{1/2}\tilde u}= 0$. Furthermore, $\norm{z_{u, \bar d}}_{\textnormal{pow}}^2 = \inner{Q^{1/2}{x}}{Q^{1/2}{x}} + \inner{R^{1/2}u}{R^{1/2}u}$, and we have,
\be
\label{eq.oe2}
\ba{rcl}
\norm{z_{(\bar u + \varepsilon\tilde u), \bar d}}_{\textnormal{pow}}^2
\; - \;
\norm{z_{\bar u, \bar d}}_{\textnormal{pow}}^2
&\!\!=\!\!&
2 \, \varepsilon
\left(
\inner{Q^{1/2}\bar{x}}{Q^{1/2}\tilde{x}}
\;+\;
\inner{R^{1/2}\bar u}{R^{1/2}\tilde u}
\right)
\;+\;
\mathcal{O}(\varepsilon^2)
\\[0.1cm]
&\!\!=\!\!&
\mathcal{O}(\varepsilon^2).
\ea
\ee
Thus, the first order correction to $\norm{z_{u, \bar d}}_{\textnormal{pow}}^2$ evaluated at a constant control input $\bar{u}$ is zero, which completes the proof.
\end{proof}

	\begin{remark}
Lemma~\ref{lem} does not need Assumption~\ref{ass.pos} and it holds for all bilinear systems of the form~\eqref{eq.sys} for which there is a stabilizing constant control input.
	\end{remark}
	
Lemma~\ref{lem} implies that a constant control signal $u^\star$
$$
	u^\star
	\;\in\;
	\argmin_{\bar u \textnormal{ constant} }
	~
	\norm{z_{\bar u, \bar d} }_{\textnormal{pow}}^2,
$$
provides a local minimum for $\norm{z_{u, \bar d} }_{\textnormal{pow}}^2$. Based on Lemma~\ref{lem.convex.l2}, this quantity is convex with respect to $u$. Thus, $u^\star$ is a global minimizer. The following theorem relates the power norm of the output of system~\eqref{eq.sys-z} subject to a constant disturbance, $\norm{z_{u, \bar d} }_{\textnormal{pow}}^2$, with the worst case power norm amplification $J(u)$ and shows that the constant control signal $u^\star$ solves Problem~\ref{prob.opt}.

\begin{theorem}\label{thm.const.opt}
Let Assumption~\ref{ass.pos} hold. Then, a constant control input $u(t) = u^\star$ solves Problem~\ref{prob.opt}.
\end{theorem}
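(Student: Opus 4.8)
The plan is to reduce the minimization of $J$ over all of $\mc{B}_2$ to a minimization over constant control inputs, and to exhibit a constant minimizer. Two of the needed ingredients are already in place. First, the paragraph preceding the theorem shows that, for any \emph{fixed} constant disturbance $\bar d$, a constant input $u^\star\in\argmin_{\bar u\textnormal{ constant}}\norm{z_{\bar u,\bar d}}_{\textnormal{pow}}^2$ is a global minimizer of $u\mapsto\norm{z_{u,\bar d}}_{\textnormal{pow}}^2$ over $\mc{B}_2$, combining convexity (Lemma~\ref{lem.convex.l2}), vanishing of the directional derivative along zero-mean perturbations (Lemma~\ref{lem}), and the decomposition $u=\bar u+\tilde u$. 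Second, by~\cite{tanaka2011bounded} a constant control has a constant worst-case disturbance, so $J(\bar u)=\norm{z_{\bar u,\bar d}}_{\textnormal{pow}}^2$ for a constant $\bar d$ with $\bar d^T\bar d\le 1$. The remaining piece is a \emph{consistent} pair $(u^\star,\bar d^\star)$ — a constant control and a constant disturbance with $(\bar d^\star)^T\bar d^\star\le 1$ — such that (i) $u^\star$ minimizes $\norm{z_{\cdot,\bar d^\star}}_{\textnormal{pow}}^2$ among constant controls and (ii) $J(u^\star)=\norm{z_{u^\star,\bar d^\star}}_{\textnormal{pow}}^2$. Granting such a pair, (i) together with the first ingredient shows that $u^\star$ minimizes $\norm{z_{\cdot,\bar d^\star}}_{\textnormal{pow}}^2$ over all of $\mc{B}_2$, so that for every $u\in\mc{B}_2$
\[
J(u)\;=\;\sup_{\norm{d}_{\textnormal{pow}}^2\le 1}\norm{z_{u,d}}_{\textnormal{pow}}^2\;\ge\;\norm{z_{u,\bar d^\star}}_{\textnormal{pow}}^2\;\ge\;\norm{z_{u^\star,\bar d^\star}}_{\textnormal{pow}}^2\;=\;J(u^\star),
\]
which proves that the constant input $u^\star$ solves Problem~\ref{prob.opt}.

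To construct the pair, I would cast the problem over constant signals as a minimax. With $\bar x=-(A+D(\bar u))^{-1}B\bar d$ denoting the equilibrium, $\norm{z_{\bar u,\bar d}}_{\textnormal{pow}}^2=\bar u^T R\bar u+\bar d^T M(\bar u)\bar d$, where $M(\bar u)\DefinedAs B^T(A+D(\bar u))^{-T}Q(A+D(\bar u))^{-1}B\succeq 0$, and hence $J(\bar u)=\bar u^T R\bar u+\max_{\bar d^T\bar d\le 1}\bar d^T M(\bar u)\bar d$; since the system is positive the maximizing disturbance may be taken nonnegative. A direct minimax over $(\bar u,\bar d)$ fails because $\bar d\mapsto\bar d^T M(\bar u)\bar d$ is convex, not concave, in $\bar d$. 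I would therefore lift the disturbance to $W\DefinedAs\bar d\bar d^T$: the cost becomes $\bar u^T R\bar u+\trace(M(\bar u)W)$, which is affine (hence concave and continuous) in $W$ over the compact convex set $\co\{\bar d\bar d^T : \bar d\ge 0,\ \bar d^T\bar d\le 1\}$, and convex in $\bar u$ over the convex set of stabilizing constant inputs (convexity in $\bar u$ again following from~\cite[Theorem~2]{colaneri2014convexity}, as in Lemma~\ref{lem.convex.l2}). Sion's minimax theorem then yields a saddle point $(\bar u^\star,W^\star)$, and since the cost is affine in $W$ the maximizer may be taken at an extreme point, i.e., $W^\star=\bar d^\star(\bar d^\star)^T$ with $(\bar d^\star)^T\bar d^\star\le 1$; the pair $(\bar u^\star,\bar d^\star)$ then satisfies (i) and (ii). Attainment of the outer minimum follows from $R\succ 0$ and the cost diverging as $\bar u\to\infty$ or as $\bar u$ approaches the boundary of the (open) set of stabilizing constant inputs.

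The main obstacle is exactly this gluing step: producing a disturbance that is simultaneously worst case for $u^\star$ and the one against which $u^\star$ is constant-optimal. Because the cost is convex, not concave, in the disturbance, one cannot invoke a minimax theorem before performing the lift $W=\bar d\bar d^T$ (equivalently, a linear relaxation of the rank-one structure that turns out to be tight); one also has to check that~\cite{tanaka2011bounded} applies to the constructed $u^\star$ and that the coercivity conditions needed for attainment hold. A route that avoids minimax entirely is to show directly that $\norm{z_{M(u)\mathbf{1},\bar d}}_{\textnormal{pow}}^2\le\norm{z_{u,\bar d}}_{\textnormal{pow}}^2$ for every $u\in\mc{B}_2$: the time-shifted copies $u(\cdot-s)$ all have the same cost by time-invariance, so Jensen's inequality applied to their Cesàro average — which converges in $\mc{B}_2$ to $M(u)\mathbf{1}$ — gives the bound (modulo continuity of the convex functional at $M(u)\mathbf{1}$), and then choosing $\bar d$ worst case for $M(u)\mathbf{1}$ yields $J(u)\ge J(M(u)\mathbf{1})\ge\min_{\bar u\textnormal{ constant}}J(\bar u)$.
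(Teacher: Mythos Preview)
Your proposal is correct and follows the same overall strategy as the paper: reduce the minimization over $\mc{B}_2$ to constant inputs via convexity (Lemma~\ref{lem.convex.l2}) together with the vanishing directional derivative along zero-mean perturbations (Lemma~\ref{lem}). The difference lies in how the pieces are glued. The paper lets $u^\star$ minimize $J$ over \emph{constants}, takes $\bar d$ to be the worst-case disturbance for that $u^\star$, and then \emph{asserts} that $u^\star$ also minimizes $\norm{z_{\cdot,\bar d}}_{\textnormal{pow}}^2$ over constant inputs; combined with Lemma~\ref{lem} this makes $u^\star$ a global minimizer of $\norm{z_{\cdot,\bar d}}_{\textnormal{pow}}^2$ over $\mc{B}_2$ and contradicts the existence of any $\hat u$ with $J(\hat u)<J(u^\star)$. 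You correctly flag that this assertion is not automatic---minimizing a pointwise supremum over $\bar d$ need not make one a minimizer of the function at the particular maximizing $\bar d$---and you close the gap by constructing a genuine saddle point via Sion's theorem after the lift $W=\bar d\bar d^T$ (your Jensen/averaging alternative aims at the same end). So your route is the paper's route with one step made rigorous: the paper's shortcut goes through whenever the largest singular value of $-Q^{1/2}(A+D(u^\star))^{-1}B$ is simple, since Danskin's theorem then gives differentiability of $J$ over constants and the missing claim follows; your saddle-point construction works without that proviso, at the cost of the extra minimax machinery.
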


\begin{proof}
Let $ u^\star$ minimize $J(u)$ over the space of {\em constant\/} functions. Since~\eqref{eq.sys} with a constant control input is an LTI system, the maximum power-amplification coincides with the $\mc H_\infty$ norm. Moreover, because~\eqref{eq.sys} is a positive system, the maximal singular value of the frequency response matrix peaks at zero temporal frequency and the worst-case disturbance is a constant signal~\cite{tanaka2011bounded}. This implies that $J(u^\star) = \norm{z_{ u^\star, \bar d} }_{\textnormal{pow}}^2$ where $\bar d = v\geq 0$ is a constant nonnegative disturbance and $v$ is the right principal singular vector of the matrix $-Q^{1 \over 2}(A+D( u^\star))^{-1}B$.

Suppose there exists a time varying signal $\hat u\in \mc B_2$ such that $J(\hat u) < J( u^\star)$. Then, since $J$ measures the worst-case disturbance amplification, $\hat u$ must also decrease the power norm of the output of system~\eqref{eq.sys-z} for a constant disturbance $\bar d=v$, i.e.,
\be
	\norm{z_{\hat u, \bar d}}_{\textnormal{pow}}^2
	\;\leq\;
	J(\hat u)
	\;<\;
	\norm{z_{u^\star, \bar d}}_{\textnormal{pow}}^2
	\;=\;
	J(u^\star).
    \label{eq.bound}
\ee
We next show that this is not possible.

Every bounded $u \in \mc{B}_2$ can be written as $u = \bar u + \tilde u$ where $\bar u$ is constant and $\tilde u$ is bounded and zero-mean. Since $u^\star$ minimizes $\norm{z_{u, \bar d} }_{\textnormal{pow}}^2$ over constant control inputs and Lemma~\ref{lem} implies that $u^\star$ minimizes it over bounded zero-mean control inputs, $u^\star$ is a local minimizer of $\norm{z_{u, \bar d} }_{\textnormal{pow}}^2$ over all bounded $u \in \mc{B}_2$. By Lemma~\ref{lem.convex.l2}, this implies that $u^\star$ is a global minimizer of $\norm{z_{u, \bar d} }_{\textnormal{pow}}^2 $, contradicting~\eqref{eq.bound} and completing the proof.
\end{proof}

	\vspace*{-2ex}
\subsection{Subgradient algorithm}

We next cast Problem~\ref{prob.opt} as a finite-dimensional convex optimization problem. For constant control inputs we derive a subdifferential set of the objective function and employ a subgradient method to compute an optimal solution. Since subgradient algorithms have slow convergence rate, we provide conditions under which the objective function is differentiable. In this case, a variety of standard tools including gradient descent and quasi-Newton methods can be readily employed to ensure faster convergence.

{Theorem~\ref{thm.const.opt} implies that Problem~\ref{prob.opt} can be solved by minimizing $J(u)$ over constant control inputs. As mentioned above, for a constant control input,~\eqref{eq.sys} is} a positive system and the maximum power amplification is caused by a constant disturbance $\bar d = v$ where $v$ is the right principal singular vector of the matrix $-Q^{1\over2}(A+D(\bar u))^{-1}B$. For a constant control input $\bar u$, $\norm{ R^{1\over 2}u(t)}_{\textnormal{pow}}^2 = \bar u^TR \, \bar u$, and the objective function is given by
\[
	J(\bar u)
	~=~
	\bar{\sigma}^2
	\left(
	-Q^{1 \over 2} \left(A + D(\bar u)\right)^{-1}B
	\right)
	\;+\;
	\bar u^TR \, \bar u.
\]
Since $J(\bar u)$ is not always differentiable, subgradient methods~\cite{bertsekas1999nonlinear} can be used to find an optimal solution. Before we provide an explicit characterization for the subgradient set of $J$, we determine the adjoint of the linear operator $D$.
	
The adjoint of a linear operator $D$: $\R^m \to \D^n \subset \R^{n \times n}$ is the linear operator $D^{\dagger}$: $\R^{n \times n} \to \R^m$ which satisfies
\[
	\inner{X}{D(u)}
	\;=\;
	\inner{D^{\dagger}(X)}{u}
\]
for all $u \in \R^m$ and $X \in \R^{n \times n}$. Without loss of generality, the operator $D$ can be expressed as
\[
	D(u)
	\;=\;
	\ds\sum_{k \, = \, 1}^m D_k \, u_k
	\;=\;
	\diag \, (D_u u)
\]
{where $\diag \, (x)$ denotes a diagonal matrix with the vector $x$ on its main diagonal and the $k$th column of $D_u \in \R^{n \times m}$ is given by the main diagonal of the matrix $D_k \in \D^n$.} {The adjoint of the operator $D$ is then,}
\be
	D^\dagger \left(X\right)
	\;=\;
	D_u^T \, \diag \, (X)
	\label{eq.dadj}
\ee
where $\diag \, (X) \in \R^n$ is the vector determined by the diagonal elements of $X$.

\begin{proposition}
\label{prop:gradhinf}
Let $D$ be a linear operator and $A_{\mathrm{cl}} \DefinedAs A + D(\bar u)$ be Hurwitz. Then,
\be
	\label{eq.grad_hinf}
	\partial J(\bar u)
	\;=\;
	\bigg\{ 2\bar\sigma_{\mathrm{cl}}\ds\sum_i \alpha_i \, D^{\dagger}\left(A_{\mathrm{cl}}^{-1}Bv_i w_i^T  Q^{1/2}  A_{\mathrm{cl}}^{-1}\right)
	\;+\;
	2 R\bar u ~
	~\bigg|~ w_i^T \left(-Q^{1/2} A_{\mathrm{cl}}^{-1}B\right)v_i \,=\, \bar \sigma_{\mathrm{cl}}, ~ \alpha \,\in\, \mathcal{P} \bigg\}
\ee
where $\bar \sigma_{\mathrm{cl}} \DefinedAs \bar{\sigma} (-Q^{1/2} A_{\mathrm{cl}}^{-1}B)$
	and
\[
	\mc P
    \;\DefinedAs\;
    \bigg\{\alpha
	~|~
	\alpha_j \, \geq \, 0,
	~
	 \sum_j \alpha_j \, = \,1
    \bigg\}.
\]
\end{proposition}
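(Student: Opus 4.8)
The plan is to write the objective as $J(\bar u) = f(\bar u) + g(\bar u)$ with $f(\bar u)\DefinedAs\bar\sigma^2\big({-}Q^{1/2}A_{\mathrm{cl}}^{-1}B\big)$ and $g(\bar u)\DefinedAs\bar u^T R\bar u$, and then to obtain $\partial f$ by composing the subdifferential of the squared spectral norm with the derivative of the smooth matrix-valued map $G(\bar u)\DefinedAs-Q^{1/2}A_{\mathrm{cl}}^{-1}B$. Since $g$ is smooth with $\nabla g(\bar u)=2R\bar u$, and since $J$ is convex (so that its convex and Clarke subdifferentials coincide; recall $J$ is convex by Lemma~\ref{lem.convex.l2} and the fact that $J(\bar u)$ is a supremum over constant disturbances of $\|z_{\bar u,\bar d}\|_{\textnormal{pow}}^2$), the sum rule gives $\partial J(\bar u) = \partial f(\bar u) + 2R\bar u$, and it remains to identify $\partial f(\bar u)$ at a $\bar u$ for which $A_{\mathrm{cl}}=A+D(\bar u)$ is Hurwitz.

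First I would assemble two ingredients. (i) The inner map: on the open set where $A_{\mathrm{cl}}$ is invertible, $\bar u\mapsto A_{\mathrm{cl}}^{-1}$ is analytic, and differentiating $A_{\mathrm{cl}}A_{\mathrm{cl}}^{-1}=I$ yields $\partial G/\partial\bar u_k = Q^{1/2}A_{\mathrm{cl}}^{-1}D_kA_{\mathrm{cl}}^{-1}B$; in particular $G$ is $C^1$ near the point of interest. (ii) The outer map: $h(M)\DefinedAs\bar\sigma(M)^2$ is convex, being the square of the spectral norm composed with the convex nondecreasing function $t\mapsto t^2$ on $[0,\infty)$, and its subdifferential is the classical one, $\partial h(M) = 2\bar\sigma(M)\,\mathrm{conv}\{wv^T : \|v\|=\|w\|=1,\; w^TMv=\bar\sigma(M)\}$, i.e.\ the set $\{2\bar\sigma(M)\sum_i\alpha_i w_iv_i^T : \alpha\in\mathcal{P}\}$ over such unit singular-vector pairs. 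Being convex, $h$ is regular.

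Next I would invoke the chain rule for a regular convex outer function composed with a $C^1$ map: $\partial f(\bar u) = \big(DG(\bar u)\big)^{*}\,\partial h\big(G(\bar u)\big)$, the equality (not merely an inclusion) being justified by regularity of $h$ and, through it, of $f$, together with the known convexity of $J$. It then remains a bookkeeping step to evaluate $\big(DG(\bar u)\big)^{*}Z$ for $Z=2\bar\sigma_{\mathrm{cl}}\sum_i\alpha_i w_iv_i^T$: its $k$-th entry is $\inner{Z}{\partial G/\partial\bar u_k} = 2\bar\sigma_{\mathrm{cl}}\sum_i\alpha_i\,\mathrm{tr}\!\big(D_k\, A_{\mathrm{cl}}^{-1}Bv_iw_i^TQ^{1/2}A_{\mathrm{cl}}^{-1}\big)$, and since $D_k$ is diagonal with diagonal equal to the $k$-th column of $D_u$, stacking over $k$ and using the adjoint formula~\eqref{eq.dadj} collapses this to $2\bar\sigma_{\mathrm{cl}}\sum_i\alpha_i\,D^{\dagger}\!\big(A_{\mathrm{cl}}^{-1}Bv_iw_i^TQ^{1/2}A_{\mathrm{cl}}^{-1}\big)$. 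Adding $2R\bar u$ and letting $\alpha$ range over $\mathcal{P}$ and the pairs $(v_i,w_i)$ over all unit vectors with $w_i^T\big({-}Q^{1/2}A_{\mathrm{cl}}^{-1}B\big)v_i=\bar\sigma_{\mathrm{cl}}$ produces exactly~\eqref{eq.grad_hinf}.

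The hard part will be the chain-rule step: because $G$ is nonlinear one cannot naively compose subdifferentials, and one must certify that $\big(DG(\bar u)\big)^{*}\partial h\big(G(\bar u)\big)$ is the \emph{entire} subdifferential, not just an inner or outer bound. I would handle this by leaning on the regularity of the convex outer function $h$ (so Clarke's composite chain rule holds with equality) together with the already-established convexity of $J$ (so that its convex subdifferential is precisely what the formula computes). An alternative, more self-contained route is to verify the subgradient inequality $J(\bar u')-J(\bar u)\ge\inner{\xi+2R\bar u}{\bar u'-\bar u}$ directly, which reduces to convexity of $\bar u\mapsto\inner{Z}{G(\bar u)}$ for the relevant $Z$ (provable from positivity, since the principal singular vectors of $-Q^{1/2}A_{\mathrm{cl}}^{-1}B$ may be taken nonnegative); this is clean when $\bar\sigma_{\mathrm{cl}}$ is a simple singular value but less so under multiplicity, which is why the chain-rule argument is preferable. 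Finally, the degenerate case $\bar\sigma_{\mathrm{cl}}=0$ should be dispatched by noting $\partial h(0)=\{0\}$, so the formula correctly returns $\{2R\bar u\}$.
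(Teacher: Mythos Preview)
Your proposal is correct and follows essentially the same approach as the paper: both identify the subdifferential of the (squared) spectral norm as convex combinations of outer products $w_iv_i^T$ over achieving singular-vector pairs, and then compose with the derivative of the smooth map $\bar u\mapsto -Q^{1/2}A_{\mathrm{cl}}^{-1}B$ using the standard formula for the derivative of a matrix inverse. The paper derives $\partial\bar\sigma$ from the variational characterization $\bar\sigma(X)=\sup_{\|w\|=\|v\|=1}w^TXv$ together with the rule for subdifferentials of pointwise suprema, and then simply invokes ``the chain rule'' without further comment; your version is more careful about justifying equality (rather than inclusion) in the chain rule via regularity of the convex outer function and the established convexity of $J$, and about the degenerate case $\bar\sigma_{\mathrm{cl}}=0$, but the underlying argument is the same.
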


    \begin{proof}
The maximum singular value $\bar{\sigma}(X)$ of a matrix can be expressed as~\cite{boyd2004convex},
\[
	\bar{\sigma}(X)
	\;=\;
	\sup_{\norm{w} = 1, \norm{v} = 1} w^T Xv.
\]
The subdifferential set of the supremum over a set of differentiable functions,
\[
    f(x)
    \;=\;
    \sup_{i \, \in \, \mc I}
    ~
    f_i(x)
\]
is the convex hull of the subgradients of each function $f_j$ that achieves the maximum~\cite[Theorem 1.13]{shor2012minimization},
\[
    \partial f(x)
    \;=\;
    \ds\sum_{j | f_j(x) = f(x)} \alpha_j\nabla f_j(x),
    ~~
    \alpha \, \in \, \mc P.
\]
Therefore, the subgradient of $\bar{\sigma}$ is given by
\[
	\partial \bar{\sigma}(X)
	\;=\;
	\left\{ \sum_j \alpha_j w_jv_j^T  ~|~ w_j^T Xv_j = \bar{\sigma}(X), ~\alpha \in \mathcal{P} \right\}.
\]
The matricial derivative of $X^{-1}$ and the application of the chain rule yield~\eqref{eq.grad_hinf}.
\end{proof}

\begin{remark}
Proposition~\ref{prop:gradhinf} holds for any linear operator $D$: $\R^m
\to \R^{n \times n}$. Since we are primarily concerned with operators $D$: $\bbR^m \to \D^n$, $D^\dagger$ in~\eqref{eq.grad_hinf} can be replaced with~\eqref{eq.dadj}.
\end{remark}

In general, $J$ is nondifferentiable. To find a solution to Problem~\ref{prob.opt}, we implement a subgradient algorithm~\cite{bertsekas1999nonlinear},
\[
	u^{k+1}
	\;=\;
	u^k
	\;-\;
	\alpha_k \,
	d^k
\]
where $\alpha_k > 0$ is a step size and $d^k \in \partial J(u^k)$ is a member of the subdifferential set of the objective function. For a small enough $\alpha$, this algorithm is guaranteed to converge with convergence rate $\mc{O}(1/\sqrt{k})$. Backtracking is used to ensure closed-loop stability, but, since the subgradient algorithm is not a descent method, sufficient descent criteria such as the Armijo rule may not be employed.

\begin{remark}
When the graph associated with $A$ is strongly connected, $J$ is continuously differentiable~\cite[Proposition 9]{dhingra2016convexity} and therefore a variety of methods, including gradient descent, Newton's method, and quasi-Newton methods~\cite{bertsekas1999nonlinear} can be used to find the solution more efficiently.
\label{rem.con}
\end{remark}

The computational complexity of each iteration in the subgradient algorithm is $\mc{O}(n^3)$ because of matrix inversion and the singular value decomposition. This computational cost is of the the same order as that of evaluating $J$. {Finally, we refer the interested reader to~\cite[Section VI]{dhingra2016convexity}, where we discuss the use of proximal algorithms~\cite{becteb09,parboy13} to solve optimization problems of the form of Problem~\ref{prob.opt} augmented with structure-promoting nonsmooth regularizers.}

	\vspace*{-2ex}
\section{Robust optimal control problem}
	\label{sec.robust}

In this section we study the robust optimal control problem for system~\eqref{eq.sys} with model uncertainty. We consider uncertain dynamics of the form
\be\label{eq.hiv.delta}
\dot{x} ~=~ \left((A+\Delta_A)\;+\;  D(u - \delta_{u})\right)x ~+~ Bd,
\ee
where
$$
\Delta_A = \thbth
{\delta_{11}}{\cdots}{\delta_{1n}}
{\vdots}{\ddots}{\vdots}
{\delta_{n1}}{\cdots}{\delta_{nn}}
$$
represents the uncertainty in the matrix $A$ and the vector $\delta_{u}$ represents the uncertainty in the input $u$. {The input uncertainty $\delta_u$ is relevant in biological applications as explained in Section~\ref{sec.hiv}. We assume that the uncertainties are bounded as $|\delta_{ij}| \le \alpha_{ij}$ for all $(i,j)$} and $|\delta_{u_k}|\le\beta_k$ with $\alpha_{ij}\geq0$ and $\beta_k\geq0$. We define the set of admissible perturbations as
$$
\bold \Delta \DefinedAs \left\{ (\Delta_A,\delta_{u}) \left|\; {|\delta_{ij}|}\le\alpha_{ij}, ~ |\delta_{u_k}|\le\beta_k \right.\right\}
$$
and
$$
	\tilde A
	\;\DefinedAs\;
	 \thbth
	{\alpha_{11}}{\cdots}{\alpha_{1n}}
	{\vdots}{\ddots}{\vdots}
	{\alpha_{n1}}{\cdots}{\alpha_{nn}},
	\quad
	\beta
	\;\DefinedAs\;
	 \thbo{\beta_1}{\vdots}{\beta_m}.
$$
For {a fixed $\Delta_A$, $\delta_{u}$ and $u$,} we denote by $J (u\,;\,\Delta_A,\delta_{u})$ the power-induced gain of system~\eqref{eq.hiv.delta}. We are now ready to state the main result of this section.

\begin{theorem}\label{thm.rob}
Let Assumption~\ref{ass.pos} hold and let $D$: $\R^m \to \D^n$ be such that $-D(u) \in \D_+$ for all $u \in \R_+$.
Provided there exists a $\bar u$ such that $(A + \tilde A - D(\beta)) + D(\bar u) $ is Hurwitz, the solution to the robust optimal control problem
$$
\minimize_{u}~\maximize_{(\Delta_A,\delta_{u})\in\bold \Delta}~ J (u\,;\,\Delta_A,\delta_{u})
$$
is given by the solution to Problem~\ref{prob.opt} applied to the system
$$
\dot{x} ~=~ {\left((A \;+\; \tilde A \;-\; D(\beta)) \;+\; D(\bar u) \right)} x ~+~ Bd.
$$
\end{theorem}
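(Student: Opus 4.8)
The plan is to reduce the min-max problem to the nominal Problem~\ref{prob.opt} by showing that, because the system is positive (monotone), the worst-case perturbation over $\bold\Delta$ is attained at a single ``extremal'' vertex of the uncertainty set, independent of the control input $u$ (as long as $u$ keeps the extremal system stable). First I would fix an arbitrary stabilizing $u$ and a nonnegative constant disturbance $\bar d$, which by Theorem~\ref{thm.const.opt} and the positivity-based argument of~\cite{tanaka2011bounded} is the worst-case disturbance for every LTI realization arising from a fixed $(\Delta_A,\delta_u)$. For such a configuration the closed-loop $A$-matrix is $A_{\mathrm{cl}}(\Delta_A,\delta_u) = (A+\Delta_A) + D(u-\delta_u)$, and the induced power gain is $\bar\sigma^2\big(-Q^{1/2}A_{\mathrm{cl}}^{-1}B\big) + u^TRu$; only the first term depends on the perturbation.

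The key monotonicity step: I would argue that for Metzler $A_{\mathrm{cl}}$ with $A_{\mathrm{cl}}$ Hurwitz, the map $(\Delta_A,\delta_u)\mapsto -A_{\mathrm{cl}}^{-1}$ is entrywise nondecreasing in each off-diagonal/structural entry of $\Delta_A$ and, using the hypothesis $-D(u)\in\D_+$ for $u\in\R_+$ (so that increasing $\delta_u$ makes the diagonal of $D(u-\delta_u)$ larger, i.e. less stable), nondecreasing in each $\delta_{u_k}$ as well. This is the standard fact that for Hurwitz Metzler $M$, $-M^{-1}\ge 0$ entrywise and is monotone in the Metzler order; it follows from the Neumann-series / resolvent-positivity representation $-M^{-1} = \int_0^\infty e^{Mt}\,\mathrm dt$ and the fact that $e^{Mt}$ is entrywise monotone in the Metzler entries of $M$. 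Since $Q^{1/2}\ge0$ and $B\ge0$, the matrix $-Q^{1/2}A_{\mathrm{cl}}^{-1}B$ is then entrywise nondecreasing over $\bold\Delta$, and $\bar\sigma$ of a nonnegative matrix is monotone in its entries (Perron--Frobenius: $\bar\sigma(N) = \sqrt{\rho(N^TN)}$, and $\rho$ is monotone on nonnegative matrices). Hence $J(u;\Delta_A,\delta_u)$ is maximized over $\bold\Delta$ at the common vertex $\Delta_A=\tilde A$, $\delta_u=\beta$, uniformly in $u$.

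With the inner maximization resolved, $\maximize_{(\Delta_A,\delta_u)\in\bold\Delta}J(u;\Delta_A,\delta_u) = J\big(u;\tilde A,\beta\big)$, which is precisely the nominal cost $J$ for the system $\dot x = ((A+\tilde A - D(\beta)) + D(u))x + Bd$ (absorbing $-D(\beta)$ into the state matrix and noting $D$ is linear so $D(u-\beta)=D(u)-D(\beta)$). Minimizing over $u$ is then exactly Problem~\ref{prob.opt} for that system, which by Theorem~\ref{thm.const.opt} is solved by a constant input; the Hurwitz hypothesis on $(A+\tilde A - D(\beta)) + D(\bar u)$ is exactly Assumption~\ref{ass.pos}'s requirement of an existing stabilizing constant control for the reduced system, so Theorem~\ref{thm.const.opt} applies. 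Finally I would note that any $u$ stabilizing the extremal system stabilizes every system in $\bold\Delta$ (again by the Metzler comparison, since the extremal system dominates all others), so the min-max and the reduced problem have the same feasible set.

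I expect the main obstacle to be making the ``worst-case at a vertex, uniformly in $u$'' argument fully rigorous: one must be careful that the claimed entrywise monotonicity of $-A_{\mathrm{cl}}^{-1}$ holds throughout $\bold\Delta$ (which requires every system in $\bold\Delta$ to be Hurwitz, not just the extremal one), and that the monotonic direction of $\delta_u$ is correctly pinned down using $-D(u)\in\D_+$ together with $\bar u\ge0$ at the optimum — the latter needing the observation that the optimal nominal $u^\star$ is nonnegative, which follows because increasing any negative component of $u$ toward zero only helps both the $\bar\sigma$ term (via monotonicity) and does not change the structural sign pattern. Handling the edge case where some intermediate system in $\bold\Delta$ fails to be Hurwitz (so $J=\infty$ there) is automatic since that only makes the max larger and the extremal system is assumed stabilizable, but it should be stated.
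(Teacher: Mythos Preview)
Your approach is essentially the paper's: identify the worst-case perturbation as the extremal vertex $(\tilde A,\beta)$ via the integral representation $-M^{-1}=\int_0^\infty e^{Mt}\,\mathrm dt$ and monotonicity of the matrix exponential, then invoke Theorem~\ref{thm.const.opt} on the resulting nominal system. The paper carries this out in exactly the same order, using Lemma~\ref{lem.power} for the exponential comparison and a scaled-diagonal-dominance argument for the uniform Hurwitz claim.

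One technical point you glossed over that the paper handles explicitly: because $\Delta_A$ may have negative off-diagonal entries with $|\delta_{ij}|$ exceeding $a_{ij}$, the perturbed matrix $A_{\mathrm{cl}}(\Delta_A,\delta_u)$ need not be Metzler, so your ``monotonicity of $-M^{-1}$ in the Metzler order'' does not apply directly across all of $\bold\Delta$. The paper fixes this by choosing $\alpha>0$ so that $\mc A+\alpha I\ge 0$, noting $|\mc A_\Delta+\alpha I|\le \mc A+\alpha I$, and comparing $|e^{(\mc A_\Delta+\alpha I)t}|\le e^{|\mc A_\Delta+\alpha I|t}\le e^{(\mc A+\alpha I)t}$ before undoing the shift; this yields the needed inequality on $\bar\sigma$ without assuming $\mc A_\Delta$ is Metzler. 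Separately, your worry about needing $u^\star\ge 0$ to pin down the sign of the worst-case $\delta_u$ is unnecessary: by linearity of $D$ and the hypothesis $-D(v)\in\D_+$ for $v\in\R_+$, increasing any $\delta_{u_k}$ always increases the diagonal of $D(u-\delta_u)=D(u)-D(\delta_u)$, regardless of the sign of $u$, so $\delta_u=\beta$ is worst-case unconditionally.
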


We first state an established result which will be useful in proving Theorem~\ref{thm.rob}.

\begin{lemma}[{\cite[Theorem 8.1]{horn1985matrix}}]\label{lem.power}
The following statements are true:

\begin{enumerate}[a)]
\item Let $A\in\R^{n\times n}$ then $| \mathrm e^{A} |\leq  \mathrm e^{|A|}=|\mathrm e^{|A|}|$.
\item Let $A,B\in\R^{n\times n}$, if $0\leq A\leq B$ then $\mathrm e^{A} \leq \mathrm e^{B}$.
\item Let $A,B\in\R^{n\times n}$ if $|A|\leq |B|$ then $\bar\sigma(A)\leq \bar\sigma(B)$.
\end{enumerate}
\end{lemma}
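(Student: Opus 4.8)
The three statements are independent, and each follows from elementary properties of the power series $\mathrm e^{A}=\sum_{k\ge 0}A^{k}/k!$ together with monotonicity of the entrywise (Perron--Frobenius) order; I would treat them in turn. For part (a), the plan is to combine the entrywise triangle inequality with the entrywise submultiplicativity $|MN|\le|M|\,|N|$, which itself follows by applying the triangle inequality to each entry $\sum_{\ell} M_{i\ell}N_{\ell j}$. Induction then gives $|A^{k}|\le|A|^{k}$, and summing the series yields
\[
    |\mathrm e^{A}|
    \;=\;
    \Big|\, \sum_{k\ge 0}\tfrac{A^{k}}{k!}\,\Big|
    \;\le\;
    \sum_{k\ge 0}\tfrac{|A^{k}|}{k!}
    \;\le\;
    \sum_{k\ge 0}\tfrac{|A|^{k}}{k!}
    \;=\;
    \mathrm e^{|A|}.
\]
Because $|A|\ge 0$ entrywise, every term of $\mathrm e^{|A|}$ is nonnegative, so $\mathrm e^{|A|}=|\mathrm e^{|A|}|$, completing (a). For part (b), I would show by induction that $0\le A\le B$ forces $A^{k}\le B^{k}$ for every $k$: the step $A^{k+1}=A\,A^{k}\le A\,B^{k}\le B\,B^{k}=B^{k+1}$ uses nonnegativity of $A$ and of $B^{k}$ to preserve the order under multiplication. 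Summing $A^{k}/k!\le B^{k}/k!$ over $k$ then gives $\mathrm e^{A}\le\mathrm e^{B}$.

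Part (c) is the substantive one, and my plan is to pass from singular values to spectral radii of nonnegative matrices. Writing $\rho(\cdot)$ for the spectral radius, I would use $\bar\sigma(A)^{2}=\rho(A^{T}A)$, valid since $A^{T}A$ is symmetric positive semidefinite. The entrywise estimate $|A^{T}A|\le|A|^{T}|A|$ (again from the triangle inequality applied entrywise), together with $|A|\le|B|$ propagated through products of nonnegative matrices to $|A|^{T}|A|\le|B|^{T}|B|$, yields the chain
\[
    |A^{T}A|\;\le\;|A|^{T}|A|\;\le\;|B|^{T}|B|.
\]
I would then invoke two monotonicity facts for the spectral radius under the entrywise order: (i) $|M|\le N$ with $N\ge 0$ implies $\rho(M)\le\rho(N)$, and (ii) $0\le P\le Q$ implies $\rho(P)\le\rho(Q)$. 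Applying (i) to $M=A^{T}A,\ N=|A|^{T}|A|$ and then (ii) to $|A|^{T}|A|\le|B|^{T}|B|$ gives $\bar\sigma(A)^{2}=\rho(A^{T}A)\le\rho(|B|^{T}|B|)=\bar\sigma(|B|)^{2}$, i.e. $\bar\sigma(A)\le\bar\sigma(|B|)$. Since the lemma is invoked with a nonnegative comparison matrix---so that $B=|B|$ and hence $\bar\sigma(B)=\bar\sigma(|B|)$---the stated inequality $\bar\sigma(A)\le\bar\sigma(B)$ follows.

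The main obstacle is entirely in part (c): establishing the spectral-radius monotonicity facts (i) and (ii). Both follow from Gelfand's formula $\rho(M)=\lim_{k\to\infty}\|M^{k}\|^{1/k}$ applied with a submultiplicative norm that is monotone under the entrywise order and depends only on entrywise magnitudes---for instance $\|X\|=\sum_{i,j}|X_{ij}|$. Indeed, for (ii) the induction of part (b) gives $0\le P^{k}\le Q^{k}$, hence $\|P^{k}\|\le\|Q^{k}\|$ and $\rho(P)\le\rho(Q)$; for (i) the bound $|M^{k}|\le|M|^{k}\le N^{k}$ gives $\|M^{k}\|\le\|N^{k}\|$ and $\rho(M)\le\rho(N)$. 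This keeps the entire lemma within the power-series / entrywise-order toolkit, the only nonelementary ingredient being the limit characterization of the spectral radius.
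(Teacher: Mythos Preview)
The paper does not supply its own proof of this lemma: it is quoted as an established result from Horn and Johnson and used as a black box in the proof of Theorem~\ref{thm.rob}. Your argument therefore cannot be compared against a proof in the paper, but it stands on its own as a correct and self-contained derivation. Parts (a) and (b) are handled exactly as one would expect, via the exponential series and entrywise monotonicity of products of nonnegative matrices, and your route through Gelfand's formula for the spectral-radius monotonicity in part (c) is clean.

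Your treatment of part (c) is in fact sharper than the paper's statement. As written, the claim ``$|A|\le|B|$ implies $\bar\sigma(A)\le\bar\sigma(B)$'' is false for general $B$: take $A=\left(\begin{smallmatrix}1&1\\1&1\end{smallmatrix}\right)$ and $B=\left(\begin{smallmatrix}1&-1\\1&1\end{smallmatrix}\right)$, so that $|A|=|B|$ but $\bar\sigma(A)=2>\sqrt{2}=\bar\sigma(B)$. What your spectral-radius argument actually establishes is $\bar\sigma(A)\le\bar\sigma(|B|)$, and you rightly observe that in the paper's only use of (c) --- inside the chain~\eqref{eq.worst_case} --- the comparison matrix on the right is entrywise nonnegative (being built from $Q^{1/2}\ge 0$, $B\ge 0$, and exponentials of nonnegative matrices), so $|B|=B$ and the stated inequality holds there. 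This is the correct reading of the result and matches how it appears in Horn and Johnson, where the hypothesis is effectively $|A|\le B$ with $B\ge 0$ rather than $|A|\le|B|$.
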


\begin{proof}[Proof of Theorem~\ref{thm.rob}]
First we recall that we can write $D(u) = \diag(D_u u)$. Given a constant $u(t)=\bar u$, define
$$
    \mc A_\Delta
    \;\DefinedAs\;
    (A \,+\, \Delta_A)
    \;+\;
    D(\bar u \,-\, \delta_{u})
$$
and
$$
    \mc A
    \;\DefinedAs\;
    (A \,+\, \tilde A)
    \;+\;
    D_u(\bar u \,-\, \beta).
$$
Note that $|\mc A_\Delta| \leq \mc A$. We begin by showing that $\mc A_\Delta$ is Hurwitz for all $(\Delta_A,\delta_{u})\in\bold \Delta$ if and only if $\mc A$ is Hurwitz. Necessity is obvious as $(\tilde A,\beta)\in\bold \Delta$. To prove sufficiency, assume $\mc A$ is Hurwitz. Since $\mc A$ is Metzler, we know that there exists a vector $p>0$ such that
$$
\mc A \, p \, < \, 0,
$$
which is equivalent to
$$
    \left|a_{ii}+\alpha_{ii} + \bar u_{i} - \beta_i\right|p_i
    \;>\;
    \sum_{j\neq i} \left|a_{ij}+\alpha_{ij}\right|p_j .
$$
Clearly for all $(\Delta_A,\delta_u)\in\bold \Delta$,
$$
    \left|a_{ii}+\delta_{ii} + \bar u_{i} - \delta_{u_i}\right|p_i
    \;>\;
    \sum_{j\neq i} \left|a_{ij}+\delta_{ij}\right|p_j,
$$
which means that $\mc A_\Delta$ is \emph{scaled diagonally dominant} and therefore Hurwitz.

\vsp

Now that we know that  $\mc A_\Delta$ is stable for all $(\Delta_A,\delta_{u})\in\bold \Delta$, we show that $(\tilde A,\beta)$ is also the worst case perturbation for the performance index $J$. Let $\alpha>0$ be such that $\mc A+\alpha I\geq 0$. Note that $| \mc A_\Delta+\alpha I | \leq \mc A+\alpha I$. Let us consider  $J (u\,;\,\Delta_A,\delta_{u})$ for a constant $u(t)=\bar u$ such that $\mc A$ is Hurwitz. Since $\mc A_\Delta$ is also Hurwitz we conclude that

\begin{subequations} \label{eq.worst_case}
\begin{alignat}{1}
  J ( u\,;\,\Delta_A,\delta_{u})
  & \;=\;
  \bar\sigma^2\left(Q^{1\over 2}\int_0^\infty\mathrm e^{\mc A_\Delta \, t} \mathrm dt\, B\right)
  \;+\;
  \bar u^TR\bar u
  \\
  & \;=\;
  \bar\sigma^2\left(Q^{1\over 2}\int_0^\infty \mathrm e^{-\alpha t}\mathrm e^{(\mc A_\Delta +\alpha I)\, t}\mathrm dt\, B\right)
  \;+\;
  \bar u^TR\bar u\\
  & \;\leq\;
  \bar\sigma^2\left(Q^{1\over 2}\int_0^\infty\mathrm e^{-\alpha t}\mathrm e^{|\mc A_\Delta +\alpha I | \, t}\mathrm dt\, B\right)
  \;+\;
  \bar u^TR\bar u \label{eq.absval}\\
  & \;\leq\;
  \bar\sigma^2\left(Q^{1\over 2}\int_0^\infty \mathrm e^{-\alpha t}\mathrm e^{(\mc  A  \, +\alpha I) \,t}\mathrm dt\, B\right)
  \;+\;
  \bar u^TR\bar u \label{eq.tilde}\\
  & \;=\;
  J ( u\,;\,\tilde  A, \beta),
\end{alignat}
\end{subequations}
where~\eqref{eq.absval} follows from the {application of statements (a) and (b) in Lemma~\ref{lem.power}, while~\eqref{eq.tilde} follows from the application of (b) and (c) of Lemma~\ref{lem.power}.}

From~\eqref{eq.worst_case} we know that, given any constant input $u(t) = \bar u$, the worst case perturbation $(\Delta_A,\delta_{u})\in\bold \Delta$ is given by $(\tilde A,\beta)$. By Theorem~\ref{thm.const.opt} the optimal $u(t)$ in response to the perturbation $(\tilde A,\beta)$ is constant and therefore the proof is complete.
\end{proof}

The statement of Theorem~\ref{thm.rob} is intuitive when considering a positive system. The worst case uncertainty is given when every component of $\Delta_A$ hits its upper bound and every component of $\delta_u$ hits its lower bound. Similar results without such intuitive solutions can be derived for more complex uncertainty descriptions where not all elements of $\Delta_A$ and $\delta_u$ are allowed to vary independently. For a deeper discussion on this topic we refer the reader to~\cite{ColSmi:2016:IFA_5242} or~\cite{colombino2015robust} where robust stability for positive systems is considered in a more general setting with coupled linear and nonlinear uncertainties, respectively.

	\vspace*{-2ex}
\section{Combination drug therapy for HIV}
\label{sec.hiv}

As shown in~\cite{hernandez2011discrete, jonmatmur14}, the mutation-replication dynamics of HIV can be modeled as bilinear system~\eqref{eq.sys-z}. Here, the $i$th component of the state vector $x$ represents the population of the $i$th HIV mutant. The matrix $A$ specifies the mutation and replication rates of the virus; $A_{ij}$ represents the rate at which mutant $i$ turns into mutant $j$ and $A_{ii}$ represents the net replication rate of mutant $i$. The control input $u_k$ is the dose of drug $k$ and $D(\mre_k)$ specify how efficiently drug $k$ neutralizes each HIV mutant. The exogenous input $d$ represents the effect of noise, disturbances or unmodeled dynamics on the virus population. The performance output~\eqref{eq.z} captures the lethality each virus strand via $Q$ and the penalizes the magnitude of the drug dosages via $R$.

	\vspace*{-2ex}
\subsection{Uncertain model of HIV dynamics}

A large challenge with HIV virus dynamics is model uncertainty. Uncertainty arises from two different aspects:
\begin{itemize}
\item It is difficult to accurately estimate replication and mutation rates. Moreover, these rates vary between patients.
\item It is difficult to precisely deliver drugs. Maintaining the target drug concentration at the treatment site is itself a challenging control problem.
\end{itemize}
In dynamics~\eqref{eq.hiv.delta}, $\Delta_A$ represents the uncertainty in the replication-mutation rates  and $\delta_{u}$ represents uncertainty in the delivered drug dosage.

Theorem~\ref{thm.rob} has a clear physical interpretation for this application; the worst case perturbation is the one for which the virus replicates and mutates most aggressively and the drugs are the least effective. Furthermore, Theorem~\ref{thm.const.opt} implies that the optimal treatment strategy for minimizing the effect of disturbances is maintaining a constant concentration of drugs.

	\vspace*{-2ex}
\subsection{An example}

In this section we present a simple example to illustrate the utility of the robust optimal control formulation. Consider the system described in Section~\ref{sec.hiv} for a virus with $n$ mutagens whose mutation pattern is given by the path graph in Figure~\ref{fig.rob} where $r$ represents the replication rate of each mutagen. The entry $c \geq 0$ represents a mutation pathway which is nominally zero but whose value is uncertain, i.e. $\Delta_A = c\,\mre_1\mre_n^T$. The control authority $u$ represents a drug which treats all the mutagens identically with no uncertainty, i.e. $D(u) = -uI$ and $\beta = 0$. The input disturbance matrix is given by $B = \mre_n$, the state penalty matrix is $Q = \mre_n \mre_n^T$, and the control input penalty matrix is $R = \rho I$.

Uncertainty in the model is natural when the mutation pathways are not completely understood and can be a convenient way to account for nonlinear dynamics.
The uncertain mutation pathway from mutagen $x_n$ to $x_1$ is particularly important because a nonzero value introduces a cycle into the graph associated with $A$. The optimal drug dose designed for the nominal model may not be stabilizing when applied to the true dynamics.

Since $A + D(u)$ is lower triangular when $c=0$, its eigenvalues are the diagonal entries and therefore the closed-loop system is nominally stable when $r - u < 0$.
When $c \not= 0$, the graph associated with $A + D(u)$ is a strongly connected graph and therefore has a unique largest eigenvalue~\cite[Proposition 9]{dhingra2016convexity}. It follows that any value of $c$ which changes the sign of the determinant,
\[
    \det (A \,+\, D(u))
    \;=\;
    (r \,-\, u)^n \;+\; (-1)^{n-1}c
\]
relative to $\det(A + D(u))$ when $c=0$, i.e., any $c$ such that
\[
c ~\geq~ |(r \,-\, u)^n|
\]
causes the system to be unstable.

By computing the $n1$th entry of $(A + D(u))^{-1}$, the objective function in Problem~\ref{prob.opt} can be explicitly expressed as,
\[
    J(u)
    \;=\;
    \left(
    \dfrac{|(u\,-\,r)^{n-1}|}{|(u\,-\,r)^n| \;-\; c}
    \right)^2
    \;+\;
    \rho
    u^2.
\]
In the nominal case when $c = 0$, $J(u) = (r + u)^{-2} + \rho u^2$, so the optimal value of $u$ solves the quartic equation,
\[
    (r \,+\, u)^{-3} \,-\, \rho u \;=\; 0
\]

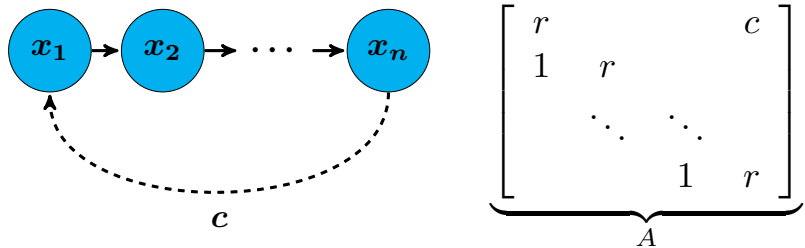
\begin{figure}
\centering
\[
	\begin{aligned}
	\raisebox{-17mm}{
	\resizebox{.35\columnwidth}{!}{
	\begin{tikzpicture}[>=stealth',shorten >=1pt, node distance=1.8cm, on grid, initial/.style={},font=\boldmath]

  \node[state,scale=1.5,fill=ProcessBlue]          (1)                        {$x_1$};
  \node[state,scale=1.5,fill=ProcessBlue]          (2) [right =of 1]    {$x_2$};
  \node[scale=1.5]                                  (dots) [right =of 2]    {$\cdots$};
  \node[state,scale=1.5,fill=ProcessBlue]          (3) [right =of dots]    {\textbf{$x_n$}};

\tikzset{every node/.style={fill=white}}

\tikzset{mystyle/.style={->,double=black}}
\path
        (1)     edge [mystyle]    (2)
        (2)     edge [mystyle]    (dots)
        (dots)  edge [mystyle]    (3);

\tikzset{mystyle/.style={dashed,->,relative=false,in=-90,out=-90,double=black}}
\path (3)     edge [mystyle] node[below=0.15cm,scale=1.5,] {$c$}   (1);

\end{tikzpicture} }}
	&
	~~~
	&
	\resizebox{.25\columnwidth}{!}{$
	\underbrace{
	    \matbegin \begin{array}{cccc}
                r &  &  & c \\
                1 & r &  &   \\
                  & \ddots & \ddots & \\
                  & & 1 & r
                \end{array}\matend
                }_{A}$}
        \end{aligned}
        \]
\caption{The dynamics are described by a path graph. The dashed line in the diagram and the $c$ entry in the matrix represent an uncertain link from $x_n$ to $x_1$ which would introduce a cycle into the mutation pattern.}
\label{fig.rob}
\end{figure}

We consider the system in Fig.~\ref{fig.rob} with $n = 10$ mutants, replication rate $r = 1$, and control penalty $\rho = 3$. A nominally optimal drug dose $u_{\text{nom}}$ was designed to minimize $J(u)$ for the nominal model (i.e., $c = 0$) and a robust optimal drug dose $u_{\text{rob}}$ was designed to solve the robust optimal control problem considered in Theorem~\ref{thm.rob} with the assumption $c \leq 0.1$ (i.e., $\alpha_{1n} = 0.1$ and all other $\alpha_{ij} = 0$, or, equivalently, $\tilde A = 0.1\mre_1\mre_n^T$).
The designed doses were,
\[
u_{\text{nom}}
~=~
1.5936,
\hspace{1cm}
u_{\text{rob}}
~=~
1.9413.
\]
Any $c \geq 0.0054$ destabilizes a system controlled by $u_{\text{nom}}$ and any $c \geq 0.5461$ destabilizes a system controlled by $u_{\text{rob}}$.

Figure~\ref{fig.robnom} shows the impulse response for the nominal model controlled by $u_{\text{nom}}$ and $u_{\text{rob}}$. Figure~\ref{fig.robrob} shows the closed loop impulse response when $c = 0.02$. The nominal controller is more sensitive to perturbations in the model.

\begin{figure}[htb]
    \begin{subfigure}[b]{1\columnwidth}
        \centering
        \includegraphics[width=0.5\columnwidth]{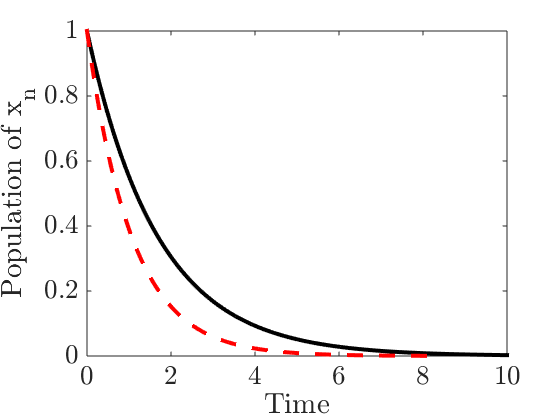}
        \caption{Closed loop impulse responses of the nominal system}
        \label{fig.robnom}
    \end{subfigure}%
    \\[0.5cm]
    \begin{subfigure}[b]{1\columnwidth}
        \centering
        \includegraphics[width=0.5\columnwidth]{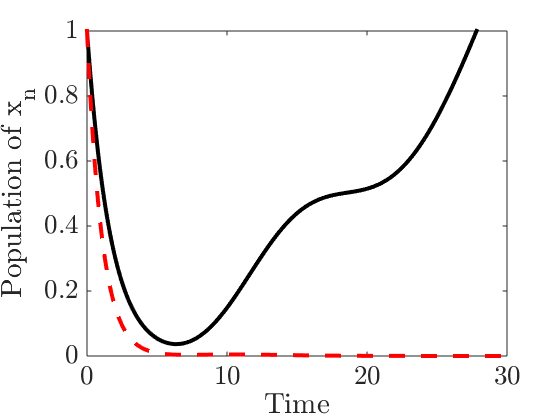}
        \caption{Closed loop impulse responses of the system with $c = 0.02$}
        \label{fig.robrob}
    \end{subfigure}
    \caption{Closed loop impulse response in the (a) nominal and (b) perturbed model with the nominal controller $u_{\text{nom}}$ ({\bf black ---}) and the robust controller $u_{\text{rob}}$ (\textcolor{red}{\bf red - - -}).}
    \label{fig.gamvar}
\end{figure}

	\vspace*{-2ex}
\section{Concluding remarks}
	\label{sec.conc}

We study an infinite horizon optimal control problem for a class of monotone bilinear systems. The objective function is given by the induced power norm which represents a generalization of the $\cH_\infty$ performance metric. We prove that the optimal control signal is constant over time, show that the optimal controller can be computed by solving a finite-dimensional non-smooth convex optimization problem, and develop a subgradient algorithm to compute the optimal solution. We further extend our results for systems with uncertain dynamics and show that computing the optimal robust controller is no harder than computing the nominal controller. Finally, we apply our results to an example inspired by combination drug therapy design for the treatment of HIV.


	\vspace*{-2ex}
\section*{Acknowledgments}

The authors would like to acknowledge Peyman Mohajerin Esfahani and Kevin Nowland for helpful discussions on functional analysis.

\vspace*{-2ex}
\setstretch{1.1}

\end{document}